\DeclareMathOperator{\Stab}{Stab}
\DeclareMathOperator{\tcvp}{cvp}
\DeclareMathOperator{\tCVP}{CVP}
\DeclareMathOperator{\Del}{Del}
\DeclareMathOperator{\parity}{Parity}
\newcommand{\conv}{\operatorname{conv}}
\newcommand{\GL}{\operatorname{{\mathbf GL}}}
\newcommand{\cD}{{\mathcal D}}
\newcommand{\cS}{{\mathcal S}}
\newcommand{\R}{{\mathbb R}}
\newcommand{\Z}{{\mathbb Z}}
\newcommand{\F}{{\mathbb F}}
\title{Iso Edge Domains}
\def\QuotS#1#2{\leavevmode\kern-.0em\raise.2ex\hbox{$#1$}\kern-.1em/\kern-.1em\lower.25ex\hbox{$#2$}}
\begin{document}

\subjclass[2010]{Primary: 11H55, 52C07. Keywords: Iso-edge domains, Conway--Sloane conjecture, toroidal compactification}

\author[M. D. Sikiri\'c]{Mathieu Dutour Sikiri\'c}
\address{Mathieu Dutour Sikiri\'c, Rudjer Boskovi\'c Institute, Bijeni\v{c}ka 54, 10000 Zagreb, Croatia}
\email{mathieu.dutour@gmail.com}

\author[M. Kummer]{Mario Kummer}
\address{Technische Universit\"at Dresden,
 Fakult\"at Mathematik,
 Institut f\"ur Geometrie,
 Zellescher Weg 12-14,
 01062 Dresden, Germany}
\email{mario.kummer@tu-dresden.de}

\newcommand{\RR}{\ensuremath{\mathbb{R}}}
\newcommand{\NN}{\ensuremath{\mathbb{N}}}
\newcommand{\QQ}{\ensuremath{\mathbb{Q}}}
\newcommand{\CC}{\ensuremath{\mathbb{C}}}
\newcommand{\ZZ}{\ensuremath{\mathbb{Z}}}
\newcommand{\TT}{\ensuremath{\mathbb{T}}}

\newtheorem{theorem}{Theorem}[section]
\newtheorem{proposition}[theorem]{Proposition}
\newtheorem{corollary}[theorem]{Corollary}
\newtheorem{lemma}[theorem]{Lemma}
\newtheorem{problem}[theorem]{Problem}
\newtheorem{conjecture}{Conjecture}
\newtheorem{question}{Question}
\newtheorem{claim}{Claim}
\newtheorem{remark}[theorem]{Remark}
\theoremstyle{definition}
\newtheorem{definition}[theorem]{Definition}
\newtheorem{ex}[theorem]{Example}

\begin{abstract}
Iso-edge domains are a variant of the iso-Delaunay decomposition introduced by Voronoi.
They were introduced by Baranovskii \& Ryshkov in order to solve the covering problem in
dimension $5$.

In this work we revisit this decomposition and prove the following new results:
\begin{enumerate}
\item We review the existing theory and give a general mass-formula for the iso-edge domains.
\item We prove that the associated toroidal compactification of the moduli space of principally polarized abelian varieties is projective.
\item We prove the Conway--Sloane conjecture in dimension $5$.
\item We prove that the quadratic forms for which the conorms are non-negative are
  exactly the matroidal ones in dimension $5$.
\end{enumerate}

\end{abstract}

\maketitle

\section{Introduction}\label{SectionIntroduction}
Voronoi introduced several {polyhedral} decompositions of the cone of positive definite quadratic forms {that are invariant under the action of the general linear group over the integers}.
The decomposition \cite{VoronoiI} uses perfect forms while the decomposition \cite{VoronoiII}
uses Delaunay polytopes. See \cite{AchillBook} for an overview of such decompositions.

Let $A\in\cS^n_{>0}$ be a positive definite quadratic form on $\R^n$. {A polytope $P\subset\R^n$ with vertices in $\Z^n$}
is called a \emph{Delaunay polytope} if there is some $c\in\R^n$ and $r>0$ such that $${A[x-c]:=(x-c)^tA(x-c)\geq r}$$
for all $x\in\Z^n$ with equality if and only if $x$ is a vertex of $P$. {It follows from the definition that if $P$ is a Delaunay polytope and $x\in\Z^n$, then $x+P$ is also a Delaunay polytope.} The set $\Del(A)$ of all
Delaunay polytopes is the \emph{Delaunay subdivision} associated to $A$. The \emph{secondary cone} of a
Delaunay subdivision $\cD$ is $$\Delta(\cD)=\{A'\in\cS^n_{>0}:\, \Del(A')=\cD\},$$also known as
\emph{$L$-type domain}. The secondary cone $\Delta(A)$ of $A\in\cS^n_{>0}$ is defined to
be $\Delta(\Del(A))$. Voronoi's second reduction theory \cite{VoronoiII} states that the set of
all secondary cones is {an open} polyhedral subdivision of the cone $\cS^n_{>0}$ of positive definite
symmetric matrices on which the group $\GL_n(\Z)$ acts by conjugation. For any fixed $n$ this
group action has only finitely many orbits. The \emph{iso-edge domain}, or \emph{$C$-type domain}\footnote{The terminology ``$C$-type'' comes from ``$L$-type'' introduced by Voronoi. The Cyrillic
  letter $C$ is pronounced like the Latin letter $S$ and refers to ``skeleton'', that is the graph
  determined by the edges of the Delaunay subdivision.},
introduced in \cite{InitialCtypeRyshkovBaranovskii} is a coarser subdivision of $\cS^n_{>0}$:
The iso-edge domain of a Delaunay subdivision $\cD$ is defined as
\begin{equation*}
C(\cD)=\{Q\in\cS^n_{>0}:\, E\in\Del(Q)\textrm{ for all }E\in\cD\textrm{ with }E \textrm{ centrally symmetric}\}.
\end{equation*}
{For a generic positive definite quadratic form, all Delaunay polytopes are simplices. Thus, in this case,} the only centrally symmetric Delaunay polytopes are the edges justifying the name.
The iso-edge domain $C(A)$ of $A\in\cS^n_{>0}$ is defined to be $C(\Del(A))$.
It was shown in \cite{InitialCtypeRyshkovBaranovskii} that the set of all iso-edge domains
is a tiling of $\cS^n_{>0}$.
Each iso-edge domain is the union of finitely many secondary cones.
Moreover, in each fixed dimension $d$ there are {only} finitely many iso-edge
domains up to $\GL_n(\Z)$-equivalence. {Further, the authors of \cite{InitialCtypeRyshkovBaranovskii}} classified the iso-edge domains in dimension $5$ and found
$76$ full-dimensional types. This was {confirmed} in \cite{EngelRediscovery}.
Recently, in \cite{CtypeDimSix} the classification in dimension $6$ yielded 55083358
full-dimensional types.

In Subsection \ref{SUBSEC_ParityVector} we give general definitions {of} the domains.
In Subsection \ref{SUBSEC_EnumerationCtype} an algorithm for enumerating the iso-edge domains
in a fixed dimension is given.

In {algebraic geometry}, compactifications of the space of principally polarized abelian varieties
were considered and they are described by equivariant polyhedral {decompositions} of the cone of positive
definite quadratic forms. The most thoroughly studied compactifictions are the perfect form decomposition,
the central cone compactification and the secondary cone compactification (see \cite{NamikawaToroidal}).
In Section \ref{SectionCompactification} we prove that the iso-edge domains define a projective
compactification.
In Subsection \ref{SUBSEC_MassFormula} a mass formula involving all the iso-edge domains of a given
dimension is given.
In Subsection \ref{SUBSEC_EnumerationDimFive} the enumeration of all iso-edge domains in dimension $5$
is presented.

The notion of iso-edge domain is useful if one is interested in properties that only depend on the edges of
the Delaunay polytope or, equivalently, on the facets of the Voronoi polytope,
see e.g. \cite{ChromaticVoronoi}.
We address here two questions of this kind. For every {edge} $e = [u,v]$ of a Delaunay polytope,
the {mid point} $(u+v)/2$ defines a class $c$ in $\frac{1}{2}L / L$ where $L$ is the lattice.
We then set $f(c) = \Vert u - v\Vert$. Conway \& Sloane conjectured that this function
characterizes $L$ up to conjugacy and prove it in dimension at most $3$ \cite[p.~58]{conwaysloanevi}.
The conjecture is proved in dimension $4$ in \cite[Ch 4.4]{VallentinThesis}.
We prove this conjecture in dimension $5$ in Section \ref{SectionConwaySloane}.
We further prove in dimension $4$ and {in dimension} $5$ that the {Fourier transform of $f$ takes only non-negative values} if and only
if $A$ belongs to the matroidal locus. This implies a positive answer to \cite[Question 4.11]{ChuaKummerSturmfels}
for $d\leq 5$.

%Another important property is the fact that iso-edge domains form a coarsening of the
%secondary cone subdivision which can be effectively computed in some cases when the latter
%cannot be computed effectively anymore. Along these lines we give the full cell complex
%defined by the iso-edge domains for $n=5$.

In the final Section \ref{SectionGeneralization} we list a number of possible generalizations of
the iso-edge domains that could be of interest to researchers.

\section{Definitions and Enumeration techniques}\label{SectionEnumeration}

\subsection{Voronoi polytopes and parity vectors}\label{SUBSEC_ParityVector}
For any natural number $n$ we define $$\parity_n=\left\{0,\frac{1}{2}\right\}^n - \{0\}$$ the set of parity vectors.
Given a positive definite quadratic form $A$ and a vector $v$ we define $A\left\lbrack v\right\rbrack = v^t A v$ and
\begin{equation*}
\begin{array}{rcl}
\tcvp(A,v) &=& \min_{x\in \ZZ^n} A\left\lbrack x - v\right\rbrack\\
\tCVP(A,v) &=& \left\{x\in \ZZ^n\mbox{~s.t.~}  A\left\lbrack x - v\right\rbrack = \tcvp(A,v)\right\}
\end{array}
\end{equation*}
{The notations $\tcvp$ and $\tCVP$ refer to the \emph{closest vector problem}.}

{
\begin{lemma}
 The mapping $$v \mapsto\conv( \tCVP(A,v))$$ establishes a bijection between $\parity_n$ and the set of centrally symmetric Delaunay polytopes (up to translation by some $w\in\Z^n$).
\end{lemma}

\begin{proof}
 Let $P=\conv( \tCVP(A,v))$. It follows directly from the definition that $P$ is a Delaunay polytope. We claim that it is centrally symmetric with center $v$. Indeed, let $x\in\tCVP(A,v)$. Then the reflection of $x$ across $v$ is $2v-x\in\Z^n$. We have $A[(2v-x)-v)]=A[v-x]=\tcvp(A,v)$ which shows that $2v-x\in\tCVP(A,v)$. Thus $P$ is a centrally symmetric Delaunay polytope.
 
 Conversely, let $P$ be a centrally symmetric Delaunay polytope with center $v$. For any vertex $x\in\Z^n$ of $P$, the reflection $y$ of $x$ across $v$ is also a vertex of $P$ and thus $y\in\Z^n$. Since $v$ is the mid point of $[x,y]$, we can write $v=v'+a$ for $v'\in\parity_n$ and $a\in\Z^n$. After a translation we can thus assume that the center $v$ of $P$ is in $\parity_n$. Mapping $P$ to $v\in\parity_n$ thus defines the inverse map of $v \mapsto\conv( \tCVP(A,v))$.
\end{proof}

}

In particular, the edges of Delaunay polytopes determine centrally symmetric Delaunay polytopes.
For a positive definite quadratic form $A$ the \emph{Voronoi polytope} is defined as
\begin{equation*}
Vor(A) = \left\{ x\in \RR^n \mbox{~s.t.~} A[x] \leq A[x - v] \mbox{~for~} v\in \ZZ^n - \{0\} \right\}.
\end{equation*}
The facets of the {Voronoi} polytope correspond to the edges of the Delaunay polytope and there are at most $2(2^n-1)$ of them.

The iso-edge domain of a Delaunay subdivision is
\begin{equation*}
C(\cD) = \left\{Q\in\cS^n_{>0}:\, D \in \Del(Q)\textrm{ for all }D\in\cD\textrm{ with }D \textrm{ centrally symmetric}\right\}.
\end{equation*}
{If} the number of facets of the Voronoi polytope is $2(2^n-1)$, {then} the iso-edge domain is full dimensional and
we say that it is {\em primitive}.
This is equivalent to saying that for each $v\in \parity_n$ we have $\left\vert\tCVP(A,v)\right\vert = 2$
or to saying that all the centrally symmetric Delaunay polytopes are $1$-dimensional.

\begin{ex}\label{ex:dim4}
 In dimension $n=4$ there are three non-equivalent full-dimensional secondary cones. In \cite[\S4.4.1]{VallentinThesis} an explicit description of one representative for each is given by means of extreme rays. Each of these secondary cones is simplicial. According to \cite{InitialCtypeRyshkovBaranovskii} the number of non-equivalent full-dimensional iso-edge domains is three as well. However, not all of them are simplicial. Indeed, consider the secondary cone $\Delta(\cD_3)$ from \cite[\S4.4.1]{VallentinThesis}. Its extreme rays are the rank one matrices $v_iv_i^t$ for $i=1,\ldots,9$ where $$V=\begin{pmatrix}|&&|\\ v_1 & \cdots & v_9 \\ |&&|\end{pmatrix}=\begin{pmatrix}1&0&0&0&1&1&0&0&1\\ 0&1&0&0&0&0&1&1&1\\ 0&0&1&0&-1&0&-1&0&-1\\ 0&0&0&1&0&-1&0&-1&-1 \end{pmatrix}$$and the rigid positive definite matrix $$R=\begin{pmatrix}4&2&-2&-2\\ 2&4&-2&-2\\ -2&-2&4&0\\ -2&-2&0&4\end{pmatrix}.$$The edges of $\cD_3$ that are adjacent to the origin are given by the columns of the matrix \[E=\begin{pmatrix}1&0&0&0&1&1&1&0&0&0&1&1&1&0&1\\ 0&1&0&0&-1&0&0&1&1&0&1&1&0&1&1\\ 0&0&1&0&0&1&0&1&0&1&1&0&1&1&1\\ 0&0&0&1&0&0&1&0&1&-1&0&1&1&1&1\end{pmatrix}\]
 and their negatives, see for example \cite[\S8.4.2]{VallentinThesis}. Multiplying the matrix $$S=\begin{pmatrix}0&0&1&0\\0&0&0&1\\1&0&0&0\\0&1&0&0\end{pmatrix}$$from the left to $V$ or $E$ permutes the columns (up to sign). This implies that the secondary cone $S^t\Delta(\cD_3)S$ belongs to the same iso-edge domain as $\Delta(\cD_3)$. Furthermore, the two equivalent simplicial secondary cones $\Delta(\cD_3)$ and $S^t\Delta(\cD_3)S$ share the facet whose extreme rays are the $v_iv_i^t$ for $i=1,\ldots,9$. Finally, these two secondary cones are the only two full-dimensional secondary cones in their iso-edge domain because they do not have any other bistellar neighbours that are equivalent to $\Delta(\cD_3)$ \cite[\S4.4.1]{VallentinThesis}. This shows that this iso-edge domain is the non-simplicial cone whose extreme rays are the $v_iv_i^t$ for $i=1,\ldots,9$ together with the two rigid positive definite matrices $R$ and $S^tRS$.
\end{ex}

\subsection{Enumeration of full dimensional iso-edge domains}\label{SUBSEC_EnumerationCtype}
A typical strategy used for enumerating polyhedral {subdivisions} such as perfect domains \cite{VoronoiI}
or secondary cones \cite{VoronoiII} is a graph traversal algorithm: One starts from one cone and for
each cone one computes the adjacent cones and {inserts} them in the complete list of cones if they are
not isomorphic to already known cones. The enumeration ends when all cones have been treated.

{A well known full-dimensional Delaunay subdivision is the one associated to the quadratic form known as \emph{Voronoi's principal form of the first type}: $$A[x]=n\sum_{i=1}^nx_i^2-\sum_{1\leq i<j\leq n}x_ix_j. $$The Delaunay subdivision of $A$ is a triangulation, see for example \cite[\S2.3]{VallentinThesis}. In particular, every centrally symmetric Delaunay polytope is $1$-dimensional. Thus by \cite[Theorem 16]{EngelRediscovery} it} defines a primitive iso-edge domain which we use as starting cone of the enumeration.

For a primitive iso-edge domain $D$ formed by vectors $(v_i)_{1\leq i \leq N}$ with $N = 2(2^n - 1)$ we determine the
finite set of triples $\{i, j, k\}$ such that $v_i + v_j + v_k = 0$.
{For any such triple, the triangle inequality gives three inequalities: $$A[v_i]=A[v_j+v_k] \leq A[v_j] + A[v_k]$$}and its permutations.
The finite set of all such inequalities is denoted by ${\mathcal F}$ and defines $D$.
From ${\mathcal F}$ we can determine by linear programming which inequalities determine a facet of $D$.
Thus we have a set of {facet-defining} inequalities as $(\phi_i)_{i\leq i\leq m}$ and we need to find the adjacent
full-dimensional iso-edge domain.
Given a facet defining inequality $\phi(A)\geq 0$, it is proportional to a number of inequalities
of the form $A[v_{i_u}] \leq A[v_{j_u}] + A[v_{k_u}]$ for $1\leq u\leq r$.
In the adjacent domain the vectors $v_{i_u}$ are replaced by $v_{j_u} - v_{k_u}$ and this defines the adjacent
domain. {Further note that the center of a one-dimensional Delaunay polytope becomes the center of a two-dimensional Delaunay polytope of a quadratic form in the lower dimensional iso-edge domain that separates two adjacent full dimensional domains.}

A full-dimensional iso-edge domain is encoded by a finite set of vectors $(v_i)_{1\leq i\leq N}$ with $N=2(2^n - 1)$
and we consider equivalence under $\GL_n(\ZZ)$.
We can thus apply the methodology of \cite[Section 3.1]{LMS_JCM_groupPolytope} in order to test
isomorphy of full-dimensional iso-edge domains.

\section{Compactification}\label{SectionCompactification}

We define $S^n_{rat, \geq 0}$ to be the {\em rational closure} of $S^n_{>0}$, that is the positive semidefinite matrices
whose kernel is defined by rational equations.
A polyhedral decomposition of $S^n_{\geq 0}$ with cones $\sigma_{\alpha}$ is called
$\GL_n(\ZZ)$-admissible (see \cite[II.4, Definition 4.10]{BookAMRT}) if
\begin{enumerate}
\item[(A1)] Each face of a $\sigma_{\alpha}$ is a $\sigma_{\beta}$.
\item[(A2)] $\sigma_{\alpha} \cap \sigma_{\beta}$ is a common face of $\sigma_{\alpha}$ and $\sigma_{\beta}$.
\item[(A3)] $\gamma \sigma_{\alpha}$ is a $\sigma_{\beta}$ for all $\gamma \in \GL_n(\ZZ)$.
\item[(A4)] modulo $\GL_n(\ZZ)$ there are only a finite number of $\sigma_{\beta}$
\item[(A5)] $S^n_{rat, \geq 0} = \cup_{\alpha} (\sigma_{\alpha} \cap S^n_{rat, \geq 0})$
\end{enumerate}

An admissible decomposition defines a compactification of the cone of principally polarized
abelian varieties (see \cite[III.5, Theorem 5.2]{BookAMRT}).
According to \cite[IV.2. Definition 2.1]{BookAMRT} an admissible decomposition defines
a projective variety if there exists a function $\phi:S^n_{rat, \geq 0}{\to\R}$ such that:
\begin{enumerate}
\item[(B1)] $\phi$ is convex, piecewise-linear
\item[(B2)] $\phi(A) > 0$ for $A\not= 0$
\item[(B3)] $\phi$ is linear on {each $\sigma_{\alpha}$ and $\phi$ is not linear on any strict superset of any $\sigma_{\alpha}$.}
\item[(B4)] $\phi$ is integral on $\Sigma_{\ZZ}$ which in this case are the integral valued matrices.
% (Not completely clear. Definition of $\Sigma_{\ZZ}$ not found in AMRT. I think it is the integral valued matrices)
\end{enumerate}

\begin{lemma}\label{lem:injective}
Let $A$ be a quadratic form on $\R^n$. If for each coset $\alpha\in\Z^n/2\Z^n$ there is an $x_{\alpha}\in \alpha$ with $A[x_{\alpha}]=0$, then $A=0$.
\end{lemma}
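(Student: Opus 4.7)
The plan is to observe that the hypothesis is a system of integer-linear equations on the Gram matrix of $A$, reduce to the case where this matrix has integer entries, and then run a $2$-adic divisibility argument. Let $M \in \Sym^2(\R^n)$ denote the symmetric matrix of $A$. Each equation $A[x_\alpha] = x_\alpha^T M x_\alpha = 0$ is an $\R$-linear equation in $M$ whose coefficients are the entries of the integer rank-one matrix $x_\alpha x_\alpha^T$. Therefore the locus $V \subseteq \Sym^2(\R^n)$ of matrices satisfying all $2^n$ equations is a $\Q$-rational linear subspace. If $V$ were nonzero it would contain a nonzero rational matrix, and after clearing denominators a nonzero integer matrix; so it suffices to prove the lemma assuming $M \in \Sym^2(\Z^n)$.

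Assume now that $M$ is integer. Reducing $M[x_\alpha] = 0$ modulo $2$ and using $x_i^2 \equiv x_i \equiv \alpha_i \pmod 2$ together with the fact that the off-diagonal terms $2 M_{ij} x_i x_j$ are automatically even, we obtain $\sum_i M_{ii}\alpha_i \equiv 0 \pmod 2$ for every $\alpha \in \F_2^n$, which forces $M_{ii} \in 2\Z$ for each $i$. Writing $M_{ii} = 2 e_i$ and reducing modulo $4$ using the sharper congruence $x_i^2 \equiv \alpha_i \pmod 4$ (valid because $\alpha_i \in \{0,1\}$) together with $x_i x_j \equiv \alpha_i \alpha_j \pmod 2$, one computes
\begin{equation*}
0 \;=\; M[x_\alpha] \;\equiv\; 2\!\left(\sum_i e_i \alpha_i + \sum_{i<j} M_{ij}\alpha_i\alpha_j\right) \pmod 4.
\end{equation*}
Thus the multilinear polynomial $\sum_i \bar e_i \alpha_i + \sum_{i<j} \bar M_{ij}\alpha_i\alpha_j \in \F_2[\alpha_1,\ldots,\alpha_n]$ vanishes on $\F_2^n$; since it is already reduced modulo the ideal $(\alpha_i^2 - \alpha_i)$, all of its coefficients must vanish, giving $M_{ii} \in 4\Z$ and $M_{ij} \in 2\Z$, i.e.\ $M/2 \in \Sym^2(\Z^n)$.

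Since $(M/2)[x_\alpha] = M[x_\alpha]/2 = 0$ for every $\alpha$, the same argument applied to $M/2$ yields $M/4 \in \Sym^2(\Z^n)$, and so on by induction; thus $M \in 2^k \Sym^2(\Z^n)$ for every $k \geq 0$, which forces $M = 0$ and completes the proof. The principal obstacle is the modulo $4$ computation: one has to isolate the overall factor of $2$ cleanly and then invoke the basic fact that a polynomial of multidegree at most $(1,\ldots,1)$ over $\F_2$ vanishing on $\F_2^n$ must be identically zero; the reduction from real matrices to integer matrices via the $\Q$-rationality of $V$ and the iteration step are then routine.
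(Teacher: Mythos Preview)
Your proof is correct and shares its core strategy with the paper: both observe that the constraints $A[x_\alpha]=0$ cut out a $\Q$-rational subspace, reduce to integer coefficients, and finish by a mod-$2$ vanishing argument. The execution differs slightly. The paper views $A$ as a polynomial $\sum a_{ii}x_i^2+\sum_{i<j}a_{ij}x_ix_j$, scales so that at least one coefficient is odd, reduces modulo $2$, and obtains an immediate contradiction since a quadratic form over $\F_2$ vanishing on all of $\F_2^n$ must be identically zero. You instead work with the Gram matrix $M$, which carries an extra factor of $2$ on the off-diagonal terms; this forces a two-stage mod-$2$/mod-$4$ computation to conclude $M/2\in\Sym^2(\Z^n)$, followed by a $2$-adic infinite descent. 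Both are valid; the paper's normalization avoids the iteration and the mod-$4$ step entirely, while your version makes the divisibility structure explicit. Incidentally, in your mod-$4$ step the ``sharper'' congruence $x_i^2\equiv\alpha_i\pmod 4$ is more than you need: since you are only computing $e_i x_i^2\pmod 2$, the ordinary $x_i^2\equiv\alpha_i\pmod 2$ already suffices.
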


\begin{proof}
 Assume for the sake of a contradiction that the claim is not true. This means that the vector space $N$ of all quadratic forms $A$ with $A[x_\alpha]=0$ for all $\alpha\in\Z^n/2\Z^n$ has positive dimension. Since $x_\alpha\in\Z^n$, the vector space $N$ must contain a nonzero quadratic form all of whose coefficients are rational numbers. After scaling appropriately, we obtain a quadratic form $A\in N$ all of whose coefficients are integers such that at least one coefficient is odd. Reducing modulo $2$ gives a quadratic form $Q$ on $\F_2^n$ with at least one nonzero coefficient which vanishes on all of $\F_2^n$.
 This cannot happen as $Q[\delta_i]=0$ for $i=1,\ldots,n$ forces the coefficient of $x_i^2$ to be zero and then $Q[\delta_i+\delta_j]=0$ forces the coefficient of $x_ix_j$ to be zero.
\end{proof}

\begin{theorem}\label{thm:admissible}
The iso-edge domains form a $\GL_n(\ZZ)$-admissible decomposition of $S^n_{rat, \geq 0}$
and define a projective compactification.
\end{theorem}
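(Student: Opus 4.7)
The plan is to treat the admissibility axioms (A1)--(A5) and the projectivity axioms (B1)--(B4) separately. Admissibility is largely inherited from classical theory: the iso-edge decomposition of $S^n_{>0}$ was constructed in \cite{InitialCtypeRyshkovBaranovskii} as a $\GL_n(\Z)$-equivariant polyhedral fan with finitely many orbits, giving (A1)--(A4) at once. Each iso-edge domain is a finite union of secondary cones (Voronoi $L$-type domains), and since the second Voronoi decomposition is known to extend to a $\GL_n(\Z)$-admissible decomposition of $S^n_{rat,\geq 0}$, the same extension works for the iso-edge decomposition, giving (A5).

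For projectivity, I would construct the support function
\[
\phi(A) = \sum_{v \in \parity_n}\bigl(2\, A[v] - \tcvp(A, v)\bigr),
\]
rescaled by $4$ at the end to obtain integrality. Condition (B1) is immediate: $-\tcvp(A, v) = \max_{x \in \Z^n}(-A[x - v])$ is a convex piecewise-linear function of $A$ as a maximum of linear functionals, and $\sum_v 2\, A[v]$ is linear. For (B2), the inequality $\tcvp(A, v) \leq A[v]$ (take $x = 0$) gives $\phi(A) \geq \sum_v A[v]$; if this sum vanishes for a rational PSD $A$, then $A[e] = 0$ for every $e \in \{0, 1\}^n$, and Lemma~\ref{lem:injective} forces $A = 0$. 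The $\GL_n(\Z)$-invariance of $\phi$ follows because conjugation $A \mapsto \gamma A \gamma^T$ induces a permutation of $\parity_n$ via $v \mapsto \gamma^T v \bmod \Z^n$, so the two sums defining $\phi$ are merely reindexed.

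For (B3), the key observation is that each summand $\phi_v(A) = A[v] - \tcvp(A, v) = \max_{x \in \Z^n}(A[v] - A[x-v])$ is convex PWL with maximal linearity domains exactly the loci where the argmin set $\tCVP(A, v)$ is constant; the common refinement of these loci over $v \in \parity_n$ is by definition the iso-edge decomposition. Across any iso-edge wall at least one $\tCVP(A, v)$ must change, producing a strict convex break in the corresponding $\phi_v$; since every summand is convex, the directional-derivative jumps across the wall are non-negative, and at least one is strictly positive, so they cannot cancel in the sum, and $\phi$ has a strict convex break there as well. For (B4), the identities $4\, A[v] = A[2v] \in \Z$ and $4\, \tcvp(A, v) = \min_{y \equiv 2v \bmod 2\Z^n} A[y] \in \Z$ on integral $A$ show that $4\phi$ is integer-valued on $\Sigma_\Z$. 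The main obstacle is the no-cancellation step in (B3): writing each $\phi_v$ as a maximum makes its convexity manifest and turns the step into the elementary fact that convex slope-jumps across a wall can only add in a sum, never annihilate; once this is in place, positivity (B2) follows cleanly from Lemma~\ref{lem:injective}.
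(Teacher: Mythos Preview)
Your support function differs from the paper's only by a global linear term: the paper takes $\phi(A)=4\sum_{v\in\parity_n}\tcvp(A,v)$, whereas you add $\sum_v 2A[v]$ and flip the sign of the $\tcvp$ part. Since a linear term does not affect the linearity domains, your (B3) argument and the paper's lead to the same conclusion; the paper verifies the strict break across a wall by the explicit triple identity $A[v]+A[v']=2A[v_1]+2A[v_2]$, while your ``convex summands cannot cancel their kinks'' argument is a pleasant abstraction of the same fact. Your (B2) via $\phi(A)\geq\sum_v A[v]$ and Lemma~\ref{lem:injective}, and your (B4), are correct and match the paper. For admissibility you defer to Ryshkov--Baranovskii and the secondary-cone theory; the paper instead argues (A1)--(A5) directly, in particular deducing (A2) from the face structure of the convex body $\{\phi\geq1\}$.

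There is one genuine slip. Your claim that $\phi$ is $\GL_n(\Z)$-invariant is false: the term $\sum_v\tcvp(A,v)$ is indeed invariant because $\tcvp(A,v)$ depends only on $v\bmod\Z^n$, but $A[v]$ depends on the actual representative, so $\sum_{v\in\parity_n}A[v]$ is not invariant. For instance with $n=2$ and $\gamma=\left(\begin{smallmatrix}1&2\\0&1\end{smallmatrix}\right)$ one computes $\sum_v(\gamma^tA\gamma)[v]=\tfrac12(7a_{11}+5a_{12}+a_{22})\neq\tfrac12(a_{11}+a_{12}+a_{22})=\sum_vA[v]$. The paper's list (B1)--(B4) does not explicitly include invariance, so within that framework your argument still goes through with the incorrect sentence simply deleted; but if you want a $\phi$ that is genuinely $\GL_n(\Z)$-invariant (as the underlying AMRT definition ultimately requires for the polarization to descend), drop the linear term and use the paper's $\phi$, accepting that the ``convex'' in (B1) is then a sign-convention matter. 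Your function does satisfy the weaker condition that $\phi(\gamma^tA\gamma)-\phi(A)$ is linear in $A$ for each $\gamma$, which already suffices for the associated line bundle to be $\Gamma$-equivariant, so the damage is cosmetic rather than structural.
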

\proof We first define the function $\phi$ as
\begin{equation*}
\phi(A) = \sum_{x\in \parity_n} 4 \tcvp(A, x).
\end{equation*}
The convexity and positivity is clear. Lemma \ref{lem:injective} implies that $A=0$ if $\phi(A)=0$ proving (B2).

On a given full dimensional iso-edge domain, $\phi$ is linear proving (B1).
Let us consider two adjacent full dimensional iso-edge domains $C_1$ and $C_2$.
If an edge vector $v$ in $C_1$ becomes a vector $v'$ in $C_2$ then there exist a triple $(v, v_1, v_2)$ in $C_1$
with $v + v_1 + v_2 = 0$ such that $v' = v_2 - v_1$. We then have the relation $A[v] + A[v'] = 2 A[v_1] + 2 A[v_2]$.
Thus we have on the interior of $C_1$ the relation $A[v] < A[v']$ and the reverse on $C_2$.
This shows that the maximal cone in which $\phi$ is linear does not extend beyond $C_1$
and so is exactly $C_1$ proving (B3).

For an integral matrix and a parity vector $x$ the minimum $\tcvp(A, x)$ is realized by a vector $v\in \ZZ^n$
thus $4 \tcvp(A,x) = A[2x - v]\in \ZZ$ proving (B4). Therefore we have a characteristic function $\phi$.
\smallskip

The iso-edge {domains} define convex polyhedral cones $\sigma_{\alpha}$.
Taking a face of such a polyhedral cone corresponds to extending the set of closest {points} proving (A1).\\

Let us consider the convex set ${\mathcal C}$ defined by $\phi(A) \geq 1$. The faces $\sigma_{\alpha}$
of this cone are defined by $\ell(x)=1$ for $\ell$ a function such that $\ell(x)\geq 1$ for $x\in {\mathcal C}$.
If $\sigma_{\alpha}$ and $\sigma_{\beta}$ are two cones of function $\ell_{\alpha}$ and $\ell_{\beta}$
then $\sigma_{\alpha}\cap \sigma_{\beta}$ is a cone of the polyhedral tessellation for the function
$(\ell_{\alpha} + \ell_{\beta}) / 2$ proving (A2).

The equivariance (A3) of iso-edge domains follows from the fact that the definitions of inequalities are invariant
under $\GL_n(\ZZ)$ equivalence.
Since every secondary cone belongs to an iso-edge domain and each iso-edge domain contains finitely many
secondary cones, finiteness (A4) follows.
Since every $A\in S^n_{rat, \geq 0}$ belongs to a secondary cone, it belongs to an iso-edge domain.
Conversely, since every iso-edge domain is the union of secondary cones,
it is included in $S^n_{rat, \geq 0}$ proving (A5). \qed

{
Already for $n=4$ this compactification is singular. Indeed, by Example \ref{ex:dim4} there is a non-simplicial iso-edge domain and non-simplicial cones give rise to singularities of the compactification by \cite[Ch.I, \S1, Thm.~4]{TEI}.}

\section{Lower dimensional cells of iso-edge domain and their mass formula}

\subsection{Mass formula}\label{SUBSEC_MassFormula}

We follow here the methodology of \cite{DutourGunnells} where mass formulas are established in the case of
perfect domains over imaginary fields.

\begin{theorem}\label{Mass_Formula}
Let us take $n\geq 3$. Let us take the set ${\mathcal S}$ of all iso-edge domains which contain a positive definite matrix. {The group $\GL(\Z^n)$ acts on $\mathcal{S}$ and for any $S\in\mathcal{S}$ we denote by $\Stab(S)$ its stabilizer.} Then we have the formula
\begin{equation}\label{Equa_Mass_Formula}
\sum_{S\in {\mathcal S}} \frac{(-1)^{\dim S}}{\left\vert \Stab(S) \right\vert} = 0.
\end{equation}

\end{theorem}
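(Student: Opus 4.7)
The proof plan is to reinterpret the alternating sum as the virtual (orbifold) Euler characteristic of $S^n_{>0}/\GL_n(\Z)$, following the template used for perfect cones in \cite{DutourGunnells}, and then to invoke the vanishing of $\chi(\GL_n(\Z))$ for $n \geq 3$.

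First I would pass to the quotient $X = S^n_{>0}/\R_{>0}$ by positive homothety, a contractible open manifold of dimension $\binom{n+1}{2}-1$. Every cell $S \in \cS$ is a closed convex cone meeting $S^n_{>0}$ and invariant under positive scaling, so its image $\bar S \subset X$ is a non-empty relatively open cell of dimension $\dim S - 1$. Since $\R_{>0}$ commutes with the $\GL_n(\Z)$-action, $\Stab(\bar S) = \Stab(S)$, and sign bookkeeping gives
\begin{equation*}
\sum_{S \in \cS} \frac{(-1)^{\dim S}}{|\Stab(S)|} \;=\; -\sum_{\bar S} \frac{(-1)^{\dim \bar S}}{|\Stab(\bar S)|}.
\end{equation*}

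Next I would appeal to Brown's formula for the virtual rational Euler characteristic of a group (cf.~K.~Brown, \emph{Cohomology of Groups}, IX.\S7). The decomposition $\{\bar S\}$ is a $\GL_n(\Z)$-equivariant cell structure on the contractible space $X$; by (A4) of Theorem~\ref{thm:admissible} there are finitely many $\GL_n(\Z)$-orbits; and each stabilizer is finite, since any $\gamma \in \GL_n(\Z)$ preserving a cone also fixes its barycenter, a positive definite form with finite automorphism group inside $\GL_n(\Z)$. Under these hypotheses, Brown's formula yields $\sum_{\bar S} (-1)^{\dim \bar S}/|\Stab(\bar S)| = \chi(\GL_n(\Z))$. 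To conclude, I would apply Harder's theorem $\chi(\SL_n(\Z)) = \prod_{k=2}^n \zeta(1-k)$: for $n \geq 3$ the product contains the factor $\zeta(-2) = 0$, so $\chi(\SL_n(\Z)) = 0$, and since $[\GL_n(\Z):\SL_n(\Z)] = 2$ one obtains $\chi(\GL_n(\Z)) = \tfrac{1}{2}\chi(\SL_n(\Z)) = 0$. Combined with the sign from the first step this proves \eqref{Equa_Mass_Formula}.

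The main obstacle will be the rigorous verification of the hypotheses of Brown's formula in view of the non-cocompact $\GL_n(\Z)$-action on $X$: although there are only finitely many orbits of cells meeting $S^n_{>0}$, cells whose closure extends to the rational boundary of $S^n_{\geq 0}$ have to be handled carefully, either by working directly with a cocompact equivariant deformation retract (such as a well-rounded retract subdivided by iso-edge cells) or by a cofinite type argument, so that the combinatorial sum genuinely computes the virtual Euler characteristic $\chi(\GL_n(\Z))$ rather than the Euler characteristic of a partial compactification.
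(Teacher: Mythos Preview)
Your proposal is correct and follows essentially the same route as the paper: both identify the alternating sum with the virtual Euler--Poincar\'e characteristic of $\GL_n(\Z)$ and invoke Harder's vanishing for $n\geq3$. Where you flag non-cocompactness as the main obstacle, the paper (deferring to \cite{DutourGunnells} for the details) resolves it by replacing each iso-edge domain $T$ with the finite combinatorial datum $\{iso(S_1),\dots,iso(S_L)\}$ of invariant extreme-ray-sum markers of the full-dimensional domains $S_i\supset T$, obtaining a contractible $\GL_n(\Z)$-complex on which the Euler-characteristic computation runs cleanly; this is a concrete instance of the kind of replacement you anticipate in your final paragraph.
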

\proof For each full dimensional iso-edge domain $S$ we compute the list of extreme rays.
For each extreme ray we choose a generator with integral coefficients whose greatest common divisor is $1$.
To $S$ we associate the sum $iso(S)$ of those generators which are invariant under the stabilizer of $S$.

A given iso-edge domain containing a positive definite quadratic form is contained in a finite set of
full-dimensional iso-edge domains $S_1$, \dots, $S_L$. We encode the set {by} $$\{iso(S_1), \dots, iso(S_L)\}.$$
This defines a complex which is contractible since the cone of positive definite quadratic form is
contractible.

The proof then follows the same line as the one of \cite[Theorem 4.6]{DutourGunnells} and the conclusion
comes from the fact that the Euler Poincar\'e characteristic of $\GL_n(\ZZ)$ is $0$ for $n\geq 3$
(see \cite{HarderGaussBonnet}). \qed

\subsection{Full decomposition for $n=5$}\label{SUBSEC_EnumerationDimFive}
The classification of possible Delaunay tessellations in dimension $5$ was obtained
in \cite{ClassifDimFive}.
The full dimensional iso-edge domains were determined in \cite{InitialCtypeRyshkovBaranovskii}.
In order to determine the iso-edge domain of lower dimension, we compute the facets of the top-dimensional
ones and check for isomorphism. Then we reiterate the procedure until we reach the iso-edge domains
of dimension $1$. We denote by $\sigma_k^n$ the set of $k$-dimensional cells in dimension $n$.
The classification for $n=5$ is given in Table \ref{tab:TableCtype_g5}.
As in \cite{ClassifDimFive} a significant check for correctness comes from Theorem \ref{Mass_Formula}.
{The stabilizers are computed in the following way: For each iso-edge domain $S$, we compute the list of
extreme rays $\{e_1, \dots, e_m\}$ and for each extreme ray $e_i$, we compute an integer matrix
generator $q_i$ that we normalize by taking the greatest common divisor of their coefficients to
be one. We then define the quadratic form that is $q(S)= \sum_i q_i$. We then compute the arithmetic
automorphism group of this positive definite form using methods of \cite[Sec. 3.1.6]{AchillBook}.}

\begin{table}
\caption{Number of $\GL_5(\ZZ)$-inequivalent iso-edge cones in~${\mathcal S}^5_{>0}$ by their dimension.}
\label{tab:TableCtype_g5}
\begin{center}
\begin{tabular}{|c|c||c|c||c|c|}
\hline
dim & nr. iso. edge      & dim & nr. iso. edge      & dim & nr. iso. edge\\
\hline
1 & 5          & 6 & 2478           & 11 & 8796\\
2 & 24         & 7 & 5180           & 12 & 4905\\
3 & 90         & 8 & 8642           & 13 & 1927\\
4 & 318        & 9 & 11350          & 14 & 478\\
5 & 972        & 10 & 11472         & 15 & 76\\
\hline
\end{tabular}
\end{center}
\end{table}

\section{The Conway--Sloane conjecture}\label{SectionConwaySloane}
% The Conway Sloane conjecture asserts that a lattice is determined up to arithmetic equivalence
% by the norms of their covectors.

For any $v\in\parity_n$ we call $\Theta_v(A):=-\tcvp(A,v)$ the \emph{tropical theta constant with characteristic $v$} because it describes the asymptotic behavior of the classical Riemann theta constants. Note that the \emph{vonorm} of the class $2v+2\Z^n$ as defined by Conway and Sloane in \cite[\S3]{conwaysloanevi} equals $-4\cdot\Theta_v(A)$. They state that $A$ is determined up to $\GL_n(\Z)$-equivalence by the vonorms for $n\leq4$ and conjecture that this holds true for arbitrary $n$:
\begin{conjecture}[Conway--Sloane]\label{con:conslo}
The map $\Theta: S^n_{>0}\to\R^{2^n-1}$ defined by
\begin{equation*}
\Theta(A) = \left(\Theta_v(A)\right)_{v\in \parity_n}
\end{equation*}
is injective on any fundamental domain of the $\GL_n(\Z)$-action on $S^n_{>0}$.
\end{conjecture}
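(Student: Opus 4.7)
The map $\Theta$ is piecewise linear, with pieces exactly the iso-edge domains: on an iso-edge domain $C(\cD)$, for each $v\in\parity_5$ the centrally symmetric Delaunay polytope representing $v$ is fixed, so choosing a vertex $x_v$ and setting $w_v=2(x_v-v)\in\Z^5$ gives $4\Theta_v(A) = -A[w_v]$. Hence $\Theta|_{C(\cD)}$ is a linear map into $\R^{31}$. The plan is to combine this piecewise-linearity with the complete classification of iso-edge domains in dimension $5$ (Table~\ref{tab:TableCtype_g5}) and reduce the conjecture to two finite verifications.

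First, I would check for each of the $76$ full-dimensional iso-edge domains that the $31$ rank-one symmetric matrices $\{w_v w_v^T\}_{v\in\parity_5}$ span $\Sym^2(\R^5)\cong\R^{15}$. This is a direct linear-algebra computation on each type and shows that $\Theta$ restricted to any full-dimensional iso-edge domain (and, by restriction, to any of its faces) is an injective linear map. Consequently, if $A,A'$ both lie in the closure of a common iso-edge domain and $\Theta(A)=\Theta(A')$, then $A=A'$.

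Second, I would handle the cross-domain case: suppose $\Theta(A)=\Theta(A')$ with $A\in\bar C_1$, $A'\in\bar C_2$ for $\GL_5(\Z)$-inequivalent iso-edge domains. Let $L_i\colon C_i\to\R^{31}$ denote the linear map $\Theta|_{C_i}$. If the cones $L_1(C_1)$ and $L_2(C_2)$ agree, then the composition $L_1^{-1}\circ L_2$ induces a linear isomorphism $C_2\to C_1$ which, by the classification of primitive iso-edge domains, must be realized by conjugation with a $\GL_5(\Z)$-element. Otherwise $L_1(C_1)\cap L_2(C_2)$ is contained in the images of proper faces, and we induct on the dimension of the minimal iso-edge cell containing both $A$ and $A'$, using the face lattice from Table~\ref{tab:TableCtype_g5}. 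The base case in dimension $1$ consists of only the $5$ ray classes and is immediate.

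The main obstacle is this cross-domain induction: while $\GL_5(\Z)$-equivariance and the inductive reduction to faces make the verification finite, the efficient organization of the enumeration over pairs of iso-edge domain orbits is the bulk of the work. A cleaner alternative I would try first is to reconstruct the $\GL_5(\Z)$-orbit of the iso-edge domain of $A$ directly from $\Theta(A)$: centrally symmetric $2$-faces of the Delaunay tiling impose linear identities among the $\Theta_v$'s that are saturated precisely when $A$ lies on the corresponding face, so the combinatorial pattern of saturated identities should encode the iso-edge cell of $A$ and thereby reduce the full conjecture to the first step.
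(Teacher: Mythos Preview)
Your overall architecture---piecewise linearity of $\Theta$ on iso-edge domains, injectivity on each piece, then a finite cross-domain check using the classification---is exactly the paper's. Two points where the paper differs, one cosmetic and one substantive.

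For the first step, the paper avoids your per-domain span computation: Lemma~\ref{lem:injective} shows, for \emph{every} $n$, that a quadratic form vanishing on one representative of each coset of $2\Z^n$ must be zero. This immediately gives injectivity of $\Theta$ on each iso-edge domain (Lemma~\ref{lem:linear}), so you do not need to verify anything case-by-case here.

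For the second step there is a genuine gap. When $L_1(C_1)=L_2(C_2)$ you assert that $L_1^{-1}\circ L_2:C_2\to C_1$ ``must be realized by conjugation with a $\GL_5(\Z)$-element'' by appeal to the classification. The classification only counts $\GL_5(\Z)$-orbits of iso-edge domains; it says nothing about arbitrary linear isomorphisms between two such cones, and a priori two inequivalent domains could have coinciding $\Theta$-images. Your inductive reduction to faces has a similar problem: you have not argued that $\Theta(C_1)\cap\Theta(C_2)$ is a \emph{face} of each image, only that it is a proper subset, so you cannot conclude that $\Theta(A)$ lies in the image of a proper face of $C_1$. The paper resolves both issues by a direct computation: for each of the $\binom{76}{2}$ pairs of image cones it verifies that (i) the intersection avoids the relative interiors, (ii) the intersection is a common face of both images, and (iii) the corresponding preimage faces of $C_1$ and $C_2$ are $\GL_5(\Z)$-equivalent. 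That triple check is precisely the finite verification your outline was reaching for, but it has to be \emph{done}, not deduced from the orbit count. Your alternative idea of reading off the iso-edge cell from the pattern of saturated linear identities among the $\Theta_v$ is attractive and would, if it worked, replace the pairwise check by a single reconstruction; the paper does not pursue it.
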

Another way to phrase Conjecture \ref{con:conslo} is that a tropical abelian variety is determined by its theta constants. This gives positive evidence for the conjecture as the corresponding statement for classical abelian varieties is true \cite{tata3}.
We will prove that it is true for $n=5$ as well as when restricted to the matroidal locus.
Lemma \ref{lem:injective} shows injectivity on each iso-edge domain.

\begin{lemma}\label{lem:segment}
 Let $A\in S_{>0}^n$ and $x,y\in\Z^n$. The line segment {$[x,y]$} is in $\Del(A)$ if and only if the closed ball $B$ (with respect to $A$) around $\frac{x+y}{2}$ with radius $r=\left\Vert\frac{x-y}{2}\right\Vert$ contains no lattice points other than $x$ and $y$.
\end{lemma}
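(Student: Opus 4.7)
The plan is to prove the two directions separately; one is immediate, and the other uses a symmetrization trick.

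The backward direction is straightforward: if the closed $A$-ball $B$ around $m=\frac{x+y}{2}$ of squared radius $A[d]$ with $d=\frac{y-x}{2}$ contains no lattice points other than $x$ and $y$, then taking $c=m$ and $r=A[d]$ in the definition of a Delaunay polytope shows that $\overline{xy}=\conv(\Z^n\cap\partial B)\in\Del(A)$.

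For the forward direction, assume $\overline{xy}\in\Del(A)$ with empty ellipsoid centered at some $c$ with squared radius $r$, so that $A[z-c]\geq r$ for all $z\in\Z^n$ with equality iff $z\in\{x,y\}$. The equality $A[x-c]=A[y-c]$ forces $c$ to lie on the $A$-perpendicular bisector of $\overline{xy}$, so $c=m+w$ with $w^tA(y-x)=0$ and $r=A[d]+A[w]$. Now I would exploit the involution $z\mapsto z':=x+y-z$ on $\Z^n$, which is the lattice reflection about $m$ and satisfies $z'-m=-(z-m)$. A direct expansion using $w^tA(y-x)=0$ gives the key identity
\begin{equation*}
A[z-c]+A[z'-c]=2A[z-m]+2A[w],
\end{equation*}
because the two cross terms $\pm 2w^tA(z-m)$ cancel. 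Applying the Delaunay inequality to both $z$ and $z'$ and summing yields $A[z-m]\geq A[d]$, with equality forcing $A[z-c]=A[z'-c]=r$, hence both $z,z'\in\{x,y\}$. Since $z'\in\{x,y\}$ is equivalent to $z\in\{x,y\}$, we conclude $A[z-m]>A[d]$ for every $z\in\Z^n\setminus\{x,y\}$, which is exactly the claim that $B$ contains no other lattice points.

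The main obstacle is the forward direction: one has to bridge the gap between the \emph{existence} of \emph{some} empty circumscribed ball (centered possibly off the midpoint) and the \emph{specific} ball centered at $m$. Finding the symmetrization via the midpoint-reflection $z\mapsto x+y-z$ (which uses only that $x,y\in\Z^n$ so this reflection preserves the lattice) is the crux; once one has the identity displayed above, the rest is routine.
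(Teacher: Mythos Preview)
Your proof is correct and follows essentially the same approach as the paper: both arguments hinge on the lattice reflection $z\mapsto z'=x+y-z$ through the midpoint $m$ and the resulting parallelogram-type identity. The paper argues the forward direction by contrapositive (taking $z\in B$ and showing $\min(A[w-z],A[w-z'])\leq A[w-x]$ for every $w$ on the bisector), while you argue it directly (fixing the Delaunay center $c$ and summing the two inequalities to get $A[z-m]\geq A[d]$); these are the same computation read in opposite directions. One tiny remark: the displayed identity $A[z-c]+A[z'-c]=2A[z-m]+2A[w]$ holds for \emph{any} $w$, not just those with $w^tA(y-x)=0$; the orthogonality is only used to get $r=A[d]+A[w]$.
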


\begin{proof}
 The ``if'' direction is clear from the definition. In order to show the other direction, let $z\in B\cap\Z^n$ with $z\not\in\{x,y\}$. Note that $z'=(x+y)-z\in B\cap\Z^n$. Further let $w\in\R^n$ such that $A[w-x]=A[w-y]$. We will show that $\min(A[w-z],A[w-z'])\leq A[w-x]$ which shows the claim. To this end, we first observe that $A[w-x]=A[w-y]$ implies $(\frac{x-y}{2})^t\cdot A\cdot (w-\frac{x+y}{2})=0$. We further have that
 \begin{equation*}
\begin{array}{rcl}
A\left\lbrack w-z \right\rbrack &=& A\left\lbrack w-\frac{x+y}{2} \right\rbrack + A\left\lbrack z-\frac{x+y}{2} \right\rbrack - (w-\frac{x+y}{2})^t A(z-\frac{x+y}{2}),\\
A\left\lbrack w-z'\right\rbrack &=& A\left\lbrack w-\frac{x+y}{2} \right\rbrack + A\left\lbrack z-\frac{x+y}{2} \right\rbrack + (w-\frac{x+y}{2})^t A(z-\frac{x+y}{2}).
\end{array}
\end{equation*}
 Therefore, we have
\begin{equation*}
\begin{array}{rcl}
\min(A\left\lbrack w-z\right\rbrack , A\left\lbrack w-z'\right\rbrack )
&\leq& A\left\lbrack w-\frac{x+y}{2}\right\rbrack + A\left\lbrack z-\frac{x+y}{2}\right\rbrack\\
&\leq& A\left\lbrack w-\frac{x+y}{2}\right\rbrack + A\left\lbrack \frac{x-y}{2}\right\rbrack = A\left\lbrack w-x\right\rbrack.
\end{array}\qedhere
\end{equation*}
\end{proof}

\begin{lemma}\label{lem:linear}
 The map $\Theta: S^n_{>0}\to\R^{2^n-1}$ is linear and injective on every iso-edge domain.
\end{lemma}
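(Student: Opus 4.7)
The plan is to exploit the defining property of iso-edge domains together with Lemma \ref{lem:injective}. First, I would establish linearity as follows. By definition of $C(\cD)$, every centrally symmetric element of $\cD$ remains Delaunay throughout the iso-edge domain. The bijection recorded in Subsection \ref{SUBSEC_ParityVector} sends each parity vector $v\in\parity_n$ to the unique centrally symmetric Delaunay polytope $\conv(\tCVP(A,v))$ centered (modulo $\Z^n$) at $v$. Combining these, for each $v\in\parity_n$ the polytope $P_v=\conv(\tCVP(A,v))$ is the same for every $A$ in the iso-edge domain, and so is its vertex set $\tCVP(A,v)$. Fix any $x_v\in\tCVP(A,v)$; then on the iso-edge domain
\begin{equation*}
\Theta_v(A)=-\tcvp(A,v)=-A[x_v-v],
\end{equation*}
which is linear in $A$. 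This gives linearity of $\Theta$ on the iso-edge domain.

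Next I would deduce injectivity from Lemma \ref{lem:injective}. Suppose $A$ and $B$ both lie in the iso-edge domain and $\Theta(A)=\Theta(B)$. By the formula above, the quadratic form $Q=A-B$ satisfies $Q[x_v-v]=0$ for every $v\in\parity_n$, hence also $Q[2x_v-2v]=0$. The vectors $w_v=2x_v-2v\in\Z^n$ have residue class $-2v\equiv 2v\pmod{2\Z^n}$; as $v$ ranges over $\parity_n=\{0,\tfrac12\}^n-\{0\}$, the classes $2v+2\Z^n$ run through every nonzero coset in $\Z^n/2\Z^n$. Together with $w=0$ for the trivial coset, this provides a representative of every class in $\Z^n/2\Z^n$ on which $Q$ vanishes. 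Lemma \ref{lem:injective} then forces $Q=0$, i.e. $A=B$.

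The only point needing any care is the first paragraph's claim that $\tCVP(A,v)$ is literally constant on $C(\cD)$ (not merely that $P_v$ is a Delaunay polytope of $A$). I would justify this by noting that a centrally symmetric Delaunay polytope of $A$ centered at $v$ must coincide with $\conv(\tCVP(A,v))$, since the latter is the unique such polytope under the bijection; hence $\tvert(P_v)=\tCVP(A,v)$ throughout the iso-edge domain. Once this is in hand, the argument reduces entirely to a linear change of variables feeding directly into Lemma \ref{lem:injective}, and no further work is needed.
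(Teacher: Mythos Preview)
Your argument is correct, and the injectivity step via Lemma~\ref{lem:injective} is identical to the paper's. The linearity step, however, is organised differently. The paper first restricts to a \emph{full-dimensional} iso-edge domain, where $\Del(A)$ is a triangulation and every centrally symmetric Delaunay polytope is a segment $[v_u,2u-v_u]$; it then invokes Lemma~\ref{lem:segment} to certify that this segment remaining Delaunay for $Q$ forces $v_u\in\tCVP(Q,u)$, and finally passes to closures by continuity to cover all iso-edge domains. You instead work on an arbitrary iso-edge domain from the outset and appeal directly to the bijection of Subsection~\ref{SUBSEC_ParityVector}: since $P_v\in\cD$ is centrally symmetric and stays Delaunay for every $Q\in C(\cD)$, uniqueness in that bijection pins it down as $\conv(\tCVP(Q,v))$, so $\tCVP(\,\cdot\,,v)$ is constant on $C(\cD)$.

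Your route is shorter and avoids the triangulation/continuity detour, at the cost of leaning on the bijection as a black box. The paper's Lemma~\ref{lem:segment} is essentially a self-contained proof of the one-dimensional case of that bijection (sufficient once one reduces to full-dimensional domains), so the paper trades brevity for not relying on the unproved general statement. If you want your version to be fully self-contained, you could note that the surjectivity half of the bijection --- that any centrally symmetric Delaunay polytope $P$ with centre $c$ satisfies $\tvert(P)=\tCVP(A,c)$ --- follows from the same parallelogram-law computation used in Lemma~\ref{lem:segment}, applied with $c$ in place of the midpoint.
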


\begin{proof}
 We show that $\Theta$ is linear and injective on the closure of every full-dimensional iso-edge domain. This implies the claim, see also Section \ref{SectionCompactification}.
 Let $A$ be a positive definite $n\times n$ matrix and ${c}\in\parity_n$. We first assume that $\Del(A)$ is a triangulation. Let $v_{{c}}\in\Z^n$ be a closest (with respect to $A$) lattice point to ${{c}}$ and define $r=A[{{c}}-v_{{c}}]$. We clearly have $r=A[(2{{c}}-v_{{c}})-{{c}}]$. Thus $2{{c}}-v_{{c}}$ is also a closest lattice point to ${{c}}$. This means that $v_{{c}}$ and $2{{c}}-v_{{c}}$ are vertices of a Delaunay polytope of $A$. Since $\Del(A)$ is a triangulation, this shows that the line segment from $v_{{c}}$ to $2{{c}}-v_{{c}}$ is in $\Del(A)$. Thus it follows from Lemma \ref{lem:segment} that $v_{{c}}$ is a closest lattice point to ${{c}}$ with respect to every $A'$ in the iso-edge domain of $A$. This means that $$\tcvp(A',{{c}}) = \min_{x\in \ZZ^n} A'[x-{{c}}]=A'[v_{{c}}-{{c}}]$$for all $A'\in C(A)$ which shows that $\Theta$ is linear on $C(A)$. Since both sides of the above equation are continuous in $A'$, this equality holds on the closure of the iso-edge domain as well. Finally, assume that $\Theta(A)=\Theta(A')$ for some $A'\in\overline{C(A)}$. Then we have $(A'-A)[v_{{c}}-{{c}}]=0$ for every ${{c}}\in\parity_n$. Thus by Lemma \ref{lem:injective} we have $A'-A=0$.
\end{proof}

Thus in order to prove the Conway--Sloane conjecture it remains to show that the image under $\Theta$ of any two different representatives of $\GL_n(\Z)$-equivalence classes of iso-edge domains are disjoint. We carry this out for $n=5$.

\begin{theorem}
The Conway--Sloane conjecture is true in dimension $5$.
\end{theorem}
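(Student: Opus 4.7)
The plan is to verify the conjecture in dimension $5$ by a direct finite case analysis built on the classification of iso-edge domains from Subsection \ref{SUBSEC_EnumerationDimFive}.

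By Lemma \ref{lem:linear}, for each iso-edge domain $C$ the restriction of $\Theta$ is a linear isomorphism onto its image $P_C:=\Theta(\overline{C})\subset\R^{31}$, which is therefore a $15$-dimensional rational polyhedral cone. As noted just before the theorem, the conjecture will follow once one establishes that, for any two iso-edge domains $C_1,C_2$ lying in distinct $\GL_5(\Z)$-orbits, the relative interiors of $P_{C_1}$ and $P_{C_2}$ are disjoint. Since $\Theta$ is $\GL_5(\Z)$-equivariant, with the action on $\R^{31}$ being the coordinate permutation induced by the reduction $\GL_5(\Z)\twoheadrightarrow\GL_5(\F_2)$ acting on $\parity_5$, this amounts to showing that the $76$ $\GL_5(\Z)$-orbits of full-dimensional iso-edge domains produce $76$ distinct orbits of cones under the $\GL_5(\F_2)$-action on $\R^{31}$.

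First I would compute, for each of the $76$ representatives listed in Subsection \ref{SUBSEC_EnumerationDimFive}, the extreme rays of $\overline{C}$ and apply $\Theta$ to obtain a $V$-representation of $P_C$. Then I would compare these cones pairwise, using the polyhedral isomorphism algorithm of \cite{LMS_JCM_groupPolytope} already employed in Subsection \ref{SUBSEC_EnumerationCtype}, to rule out coincidence up to coordinate permutations by $\GL_5(\F_2)$. Since every lower-dimensional iso-edge domain is a face of some full-dimensional one and $\Theta$ respects the face lattice through the linear isomorphism of Lemma \ref{lem:linear}, the statement for lower-dimensional domains is automatic once the full-dimensional case is settled.

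The main obstacle is the sheer size of the comparison: $\binom{76}{2}=2850$ pairs of classes must be tested and, in principle, each test involves the group $\GL_5(\F_2)$ of order $9\,999\,360$. This remains well within reach of a computer algebra system once one first separates the classes by cheap combinatorial invariants of the cones $P_C$, such as the number of extreme rays, face numbers, and stabilizer orders, and then invokes the full isomorphism test only on the few residual collisions.
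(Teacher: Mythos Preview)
Your central reduction is incorrect. You claim that disjointness of the relative interiors of the $P_{C_i}$ for inequivalent $C_i$ ``amounts to'' showing that the $76$ image cones fall into $76$ distinct $\GL_5(\F_2)$-orbits. But distinctness of orbits only says that no coordinate permutation carries one cone onto another; it says nothing about whether two $15$-dimensional cones inside $\R^{31}$ share interior points. Two polyhedral cones can be combinatorially non-isomorphic (hence certainly in different orbits) and still overlap in an open region. If that happened for some pair $P_{C_i},P_{C_j}$, you would obtain $A\in\relint C_i$ and $B\in\relint C_j$ with $\Theta(A)=\Theta(B)$ and $A\not\equiv B$, while your orbit test would report nothing wrong. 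Moreover, you have not addressed pairs $C$ and $\gamma C$ lying in the \emph{same} $\GL_5(\Z)$-orbit: their images $\Theta(C)$ and $\overline{\gamma}^{-1}\Theta(C)$ are typically different cones, and one must still verify how they intersect. Finally, your claim that the lower-dimensional case is ``automatic'' is unjustified: even if all full-dimensional image cones had disjoint interiors, two of them could share a face without the corresponding faces of the source domains being $\GL_5(\Z)$-equivalent.

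What the paper actually does is a different computation. For each pair $C_i,C_j$ among the $76$ representatives it verifies three things: that $\Theta(C_i)\cap\Theta(C_j)$ misses both relative interiors; that this intersection is a common \emph{face} of $\Theta(C_i)$ and $\Theta(C_j)$; and that the corresponding faces of $C_i$ and $C_j$ (pulled back via the linear isomorphisms of Lemma~\ref{lem:linear}) are arithmetically equivalent. The third check is precisely what handles the boundary case, and the first two are genuine polyhedral intersection computations, not an isomorphism test. Your proposed computation (polyhedral isomorphism up to $\GL_5(\F_2)$) would establish a necessary condition for the conjecture, but not the conjecture itself.
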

\begin{proof} On a given iso-edge domain the map $\Theta$ is a linear map. The image of the polyhedral cone
of the iso-edge domain is a polyhedral cone.

Since we know the iso-edge domains in dimension $5$, we can look at the obtained $76$ different
images. We found out the following for each pair $C_1$, $C_2$.
\begin{enumerate}
\item The pairwise intersection $\Theta(C_1)\cap \Theta(C_2)$ does not {happen} in their relative interiors.
\item The pairwise intersection $\Theta(C_1)\cap \Theta(C_2)$ {defines} a face of {$\Theta(C_1)$} and $\Theta(C_2)$.
\item The corresponding faces of $C_1$ and $C_2$ are arithmetically equivalent.
\end{enumerate}
Thus if two matrices have the same image then they are arithmetically equivalent. \end{proof}

\section{The matroidal locus}\label{SectionMatroidalLocus}
{Recall that a matrix with integer entries is \emph{totally unimodular} if every square submatrix has determinant $0$, $+1$ or $-1$.} Let $M$ be a {totally} unimodular $n\times r$ matrix with columns $v_1,\ldots,v_r\in\Z^n$. The set of positive definite matrices in the conic hull of the rank one matrices $v_1v_1^t,\ldots,v_r v_r^t$ is denoted by $\sigma(M)$. The union of all $\sigma(M)$ for some $r$ and some {totally} unimodular $n\times r$ matrix $M\in\Z^{n\times r}$ is called the \emph{matroidal locus} and is studied in \cite{Melo_Viviani}. In order to study the matroidal locus, we consider the Fourier transforms of the tropical theta constants.

\begin{definition}
 Let $A\in S^n_{>0}$ and $v\in\parity_n$. Then we define $$\vartheta_v(A)=\frac{1}{2^{n-3}}\sum_{w\in \{0,1/2\}^n} (-1)^{4\cdot v^{{t}} w}\cdot \Theta_w(A).$$Following \cite{conwaysloanevi} we call $\vartheta_v(A)$ the \emph{conorm} with characteristic $v$ of $A$.
\end{definition}

Conorms naturally appear when tropicalizing the Schottky--Igusa modular form which cuts out the Schottky locus in genus $4$ \cite[\S4]{ChuaKummerSturmfels}. Conorms of matrices in the matroidal locus are of special interest:

\begin{proposition}\label{prop:conormsmatroidal}
 Assume that $A\in S^n_{>0}$ lies in the matroidal locus. Then $\vartheta_u(A)\geq0$ for all $u\in\parity_n$. More precisely, let $M$ be a {totally} unimodular $n\times r$ matrix with columns $v_1,\ldots,v_r\in\Z^n$, no two of which are linearly dependent, such that we can write $$A=\sum_{i=1}^r c_i\cdot v_iv_i^t$$ for some $c_i\geq0$. Then $\vartheta_u(A)=c_i$ if $2u\equiv v_i \mod 2$ and $\vartheta_u(A)=0$ if $2u$ is not congruent to any $v_i$.
\end{proposition}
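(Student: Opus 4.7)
The plan is to reduce the claim to the rank-one generators $v_iv_i^t$ by a linearity argument on the matroidal cone $\sigma(M)$, compute $\vartheta_u(vv^t)$ for a single primitive $v\in\Z^n$ by an elementary character-sum identity, and finally use unimodularity of $M$ to collapse the resulting sum to at most one term.

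For the first step I would combine two ingredients. Since $\vartheta_u$ is a fixed rational linear combination of the tropical theta constants $\Theta_w$, Lemma \ref{lem:linear} immediately shows that $\vartheta_u$ is linear on the closure of every iso-edge domain. The classical fact (see \cite{Melo_Viviani}) that the matroidal cone $\sigma(M)$ lies in the closure of a single secondary cone, and hence inside a single iso-edge domain, would then give
\[
\vartheta_u(A)\;=\;\sum_{i=1}^{r} c_i\,\vartheta_u(v_iv_i^t),
\]
where the values at the boundary rays $v_iv_i^t$ are understood as the continuous linear extension from the interior.

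For the rank-one computation I would expand $(vv^t)[x-w]=(v\cdot(x-w))^2$ and use primitivity of $v$ (so that $v\cdot\Z^n=\Z$) to see that $\tcvp(vv^t,w)$ equals $0$ or $\tfrac14$ according to the parity of the integer $v\cdot 2w$. Plugging this into the Fourier-type definition of $\vartheta_u$ and rewriting $4u^T w=(2u)\cdot(2w)$ turns the sum into a character sum over $(\Z/2\Z)^n$. Orthogonality of characters, together with the fact that $2u\not\equiv 0\bmod 2$, should yield $\vartheta_u(vv^t)=1$ precisely when $2u\equiv v\bmod 2$ and $0$ otherwise.

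To finish I would rule out that two distinct columns $v_i,v_j$ coincide modulo $2$: if they did, every $2\times 2$ minor of the submatrix $[v_i\mid v_j]$ would be even (the determinant mod $2$ depends only on the columns mod $2$ and vanishes when they agree), and unimodularity of $M$ would force these minors to vanish outright, contradicting the hypothesis that $v_i$ and $v_j$ are linearly independent. Combined with the previous step, this identifies $\vartheta_u(A)$ with the unique $c_i$ for which $2u\equiv v_i\bmod 2$, or with $0$ if no such $i$ exists, and non-negativity is automatic from $c_i\ge 0$. I expect the main obstacle to be the first step: pinning down the precise classical statement that the entire matroidal cone $\sigma(M)$ sits inside a single iso-edge domain, and verifying that the resulting linear extension of $\vartheta_u$ to the boundary rays $v_iv_i^t$ genuinely agrees with the value furnished by the $\tcvp$ definition applied to the rank-one matrix.
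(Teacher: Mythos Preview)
The paper does not give an independent argument here: its entire proof is a citation to \cite[Thm.~4.2]{ChuaKummerSturmfels}. Your proposal, by contrast, supplies a self-contained proof, and the steps you outline are correct. The rank-one computation in your Step~2 goes through exactly as you describe (the character-sum collapses to the single term with $2u\equiv v\bmod 2$), and your Step~3 is a clean use of unimodularity to exclude collisions modulo $2$.

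Regarding the concern you flag in Step~1: the needed input is the classical fact that for a unimodular system $M$ the Delaunay subdivision of every $A\in\sigma(M)$ is the lattice \emph{dicing} determined by $M$, so that $\sigma(M)$ is itself a secondary cone (see \cite{Melo_Viviani} and the references on Delaunay dicings therein). This places $\sigma(M)$ inside the closure of a single iso-edge domain, and Lemma~\ref{lem:linear} then gives linearity of each $\Theta_w$, hence of $\vartheta_u$, on $\overline{\sigma(M)}$. The boundary issue you worry about is harmless: for a rank-one $vv^t$ the minimum $\tcvp(vv^t,w)$ exists and is attained, the map $A\mapsto\tcvp(A,w)$ is continuous along paths in $\overline{\sigma(M)}$ (it is an infimum of linear functions, and the value at $vv^t$ is realised by a fixed lattice point, giving the matching one-sided bound), so the linear extension from the interior agrees with the direct computation at the extreme rays. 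With that point settled, your argument is complete and likely recapitulates in spirit what happens inside the cited proof, with the added benefit of being readable without consulting \cite{ChuaKummerSturmfels}.
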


\begin{proof}
 This was shown in the course of the proof of \cite[Thm.~4.2]{ChuaKummerSturmfels}.
\end{proof}

This implies the Conway--Sloane conjecture for matrices in the matroidal locus.

\begin{corollary}
 Let $A,B\in S^n_{>0}$ in the matroidal locus. If $\Theta(A)=\Theta(B)$, then there is $S\in\GL_n(\Z)$ such that $A=S^t B S$.
\end{corollary}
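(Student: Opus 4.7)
The plan is to exploit the matroidal structure encoded in the conorms together with the uniqueness of unimodular representations of regular matroids. First I would pass from theta constants to conorms: the defining relation for $\vartheta_u$ is an invertible linear (Fourier-type) transform of $\Theta$ over $(\Z/2\Z)^n$ (where $\Theta_0\equiv 0$ by definition), so the hypothesis $\Theta(A)=\Theta(B)$ immediately yields $\vartheta_u(A)=\vartheta_u(B)$ for every $u\in\{0,\tfrac{1}{2}\}^n$. A useful preliminary observation is that the columns of a unimodular matrix are pairwise inequivalent modulo $2$ as soon as no two are linearly dependent: if $v_i\equiv v_j\pmod 2$ with $v_i\neq\pm v_j$, writing $v_i=v_j+2u$ shows every $2\times 2$ minor of $(v_i,v_j)$ is even and thus forced to be zero by total unimodularity, contradicting linear independence.

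Next I would fix matroidal decompositions $A=\sum_{i=1}^r c_i\, v_i v_i^t$ and $B=\sum_{j=1}^s d_j\, w_j w_j^t$ coming from unimodular matrices $M_A,M_B$ whose columns are pairwise linearly independent. By Proposition \ref{prop:conormsmatroidal} and the preceding observation, the pairs $(v_i\bmod 2,\,c_i)$ can be read off directly from the support and the nonzero values of $\vartheta(A)$, and similarly for $B$. Equality of conorms therefore forces $r=s$ and, after reindexing, $v_i\equiv w_i\pmod 2$ and $c_i=d_i$ for every $i$.

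In the key step I would invoke the classical structure theorem for regular matroids: two totally unimodular $n\times r$ matrices whose reductions modulo $2$ agree as ordered column sequences must differ only by a left action of $\GL_n(\Z)$ composed with $\pm 1$ rescalings of the columns. Applied to $M_A$ and $M_B$, this produces $S\in\GL_n(\Z)$ and signs $\varepsilon_i\in\{\pm 1\}$ with $Sv_i=\varepsilon_i w_i$ for all $i$. Since rank-one matrices are insensitive to these signs,
\begin{equation*}
SAS^t=\sum_i c_i\,(Sv_i)(Sv_i)^t=\sum_i c_i\, w_i w_i^t = B,
\end{equation*}
and setting $T=(S^{-1})^t\in\GL_n(\Z)$ yields $A=T^tBT$, as desired.

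The main obstacle I anticipate is the precise invocation of the regular-matroid uniqueness statement. It is classical (essentially due to Tutte) but comes in several variants, and I would need to pin down a formulation that exactly matches our setup—namely, matrices with columns indexed by elements of $(\Z/2\Z)^n\setminus\{0\}$—or else supplement the cited result with a small normalization step. A secondary subtlety is that a given $A$ in the matroidal locus may admit several matroidal decompositions; this is harmless here because the argument is symmetric in these choices and only uses one decomposition on each side.
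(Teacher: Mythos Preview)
Your proposal is correct and follows essentially the same route as the paper: both arguments pass from $\Theta$ to the conorms, use Proposition~\ref{prop:conormsmatroidal} to read off the underlying binary (hence regular) matroid together with the coefficients $c_i$, and then invoke unique representability of regular matroids (the paper cites \cite[Cor.~10.1.4]{oxley}) to produce the required element of $\GL_n(\Z)$. Your explicit lemma that distinct columns of a totally unimodular matrix with no parallel pair are pairwise inequivalent modulo~$2$ is a useful detail the paper leaves implicit, and your anticipated ``normalization step'' is exactly what is needed to upgrade the cited unique-representability statement to integral $S$ and $\pm1$ column scalings.
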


\begin{proof}
 Let $M$ be a {totally} unimodular $n\times r$ matrix with columns $v_1,\ldots,v_r\in\Z^n$, no two of which are linearly dependent, such that we can write $$A=\sum_{i=1}^r c_i\cdot v_iv_i^t$$ for some $c_i\geq0$. We first observe that Proposition \ref{prop:conormsmatroidal} implies that the regular matroid represented by $M$ can be recovered as the binary matroid represented by the $2u$ with $u\in\parity_n$ and $\vartheta_u(A)\neq0$. Thus it is uniquely determined by $\Theta(A)$. By \cite[Cor.~10.1.4]{oxley} every regular matroid is uniquely representable which implies that the rank one matrices $v_iv_i^t$ are uniquely determined by $\Theta(A)$ up to simultaneous $\GL_n(\Z)$-action. Finally, Proposition \ref{prop:conormsmatroidal} shows that the coefficients $c_i$ are also determined by $\Theta(A)$ which completes the proof.
\end{proof}

We conjecture that the matroidal locus can be characterized by the conorms.

\begin{conjecture}\label{con:matroidal}
A matrix $A\in S^n_{>0}$ lies in the matroidal locus if and only if $\vartheta_u(A)\geq0$ for all $u\in\parity_n$.
\end{conjecture}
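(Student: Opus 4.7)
\noindent\emph{Proof plan.}
The ``only if'' direction is provided by Proposition~\ref{prop:conormsmatroidal}. For the converse, the plan is to reduce the statement to a finite verification on iso-edge domains, which becomes feasible for $n\le 5$.

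\emph{Step 1: Reduction to iso-edge domains.}
The conorms $\vartheta_u$ are an invertible $\R$-linear Fourier transform of the tropical theta constants $\Theta_w$, so Lemma~\ref{lem:linear} gives that each $\vartheta_u$ is linear on every closed iso-edge domain $\overline C$. Both the condition ``$\vartheta_u(A)\ge 0$ for all $u$'' and the matroidal locus are $\GL_n(\Z)$-invariant, so it suffices to work with one representative of each $\GL_n(\Z)$-orbit of full-dimensional iso-edge domains; lower-dimensional strata then follow by closedness of the matroidal locus.

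\emph{Step 2: Cones and candidate matroids.}
Fix a full-dimensional iso-edge domain $C$. The inequalities $\vartheta_u(A)\ge 0$ for $u\in\parity_n$ cut out a polyhedral subcone $C_+\subseteq C$. The aim is to exhibit a totally unimodular $n\times r$ matrix $M_C$ whose columns are primitive lattice vectors $v_1,\ldots,v_r$ such that $C_+ \subseteq \sigma(M_C)$. Once such an $M_C$ is found, Proposition~\ref{prop:conormsmatroidal} identifies the coefficients $c_i\ge 0$ of the representation $A=\sum c_i v_iv_i^t$ with the conorms $\vartheta_{u_i}(A)$ for the parities $u_i$ with $2u_i\equiv v_i\pmod 2$, confirming $A\in\sigma(M_C)$.

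\emph{Step 3: Case analysis in dimension $5$.}
Carry out Step~2 for each of the $76$ $\GL_5(\Z)$-orbits of full-dimensional iso-edge domains classified in \cite{InitialCtypeRyshkovBaranovskii}. Concretely, for every orbit: compute $\Theta_w|_C$ and hence $\vartheta_u|_C$ as explicit linear forms in the entries of $A$; determine the extreme rays of $C_+$; verify that each extreme ray is a rank-one matrix $vv^t$ with $v\in\Z^n$ primitive; and check that the resulting primitive vectors form the columns of a totally unimodular matrix $M_C$. Since total unimodularity is preserved by passage to sub-configurations of columns, this handles every face of $C_+$ simultaneously and thus every $A\in C_+$.

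\emph{Expected main obstacle.}
The vectors $v_i$ arising from the extreme rays of $C_+$ are generally \emph{not} the Delaunay edge directions of $C$: e.g.\ in $n=2$ the matrix $A=\begin{pmatrix}2 & -1\\-1 & 2\end{pmatrix}$ lies in the iso-edge domain with short vectors $e_1, e_2, e_1+e_2$, yet its matroidal representation uses $v_3=(1,-1)\not\in\{(1,1)\}$. The correct lifts of the cosets $2u+2\Z^n$ must be recovered from the linear forms $\vartheta_u|_C$ themselves, and no uniform a priori rule for choosing them (let alone ensuring that the resulting configuration is totally unimodular) is currently known. This is the reason the proof in this form does not extend beyond the range $n\le 5$ where an exhaustive case analysis is computationally feasible.
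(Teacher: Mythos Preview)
The statement in question is labeled a conjecture in the paper and is \emph{not} proved there for general $n$; the paper only establishes the cases $n=4$ and $n=5$ in two separate theorems. Your proposal likewise does not prove the conjecture in general---as you yourself note in the final paragraph, Steps~1--3 reduce to a finite verification that is only carried out for $n\le 5$. Within that range your plan is essentially identical to the paper's proof for $n=5$: use linearity of each $\vartheta_u$ on every iso-edge domain, restrict to one representative of each of the $76$ $\GL_5(\Z)$-orbits of full-dimensional domains, cut out the polyhedral subcone $C_+=\{A\in D:\vartheta_u(A)\ge 0\text{ for all }u\}$, compute its extreme rays, and check that they are rank-one matrices $vv^t$ whose vectors $v$ form a unimodular system. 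The paper adds the remark that all four maximal unimodular systems in dimension $5$ arise this way, which you do not mention but which is not needed for the proof.
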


Note that a proof of Conjecture \ref{con:matroidal} would also give a positive answer to \cite[Question 4.11]{ChuaKummerSturmfels}.
Since for $n\leq3$ every positive definite matrix is in the matroidal locus, the conjecture is true for $n\leq3$ by Proposition \ref{prop:conormsmatroidal}. We show that it is true for $n=4,5$ as well.

\begin{theorem}
Conjecture \ref{con:matroidal} is true for $n=4$.
\end{theorem}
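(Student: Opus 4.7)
The ``only if'' direction is immediate from Proposition \ref{prop:conormsmatroidal}. For the converse, let $\cM \subset S^4_{>0}$ denote the matroidal locus and set $K = \{A \in S^4_{>0} : \vartheta_u(A) \geq 0 \text{ for all } u \in \parity_4\}$. Both $\cM$ and $K$ are closed and $\GL_4(\Z)$-invariant, and the iso-edge domains tile $S^4_{>0}$ with only finitely many $\GL_4(\Z)$-orbits. Hence it suffices to verify the inclusion $K \cap C \subseteq \cM$ for a representative $C$ of each of the three $\GL_4(\Z)$-orbits of full-dimensional iso-edge domains in dimension $4$.

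My plan is as follows. First, I would fix representatives of these three orbits, available from the classical classification recalled in Section \ref{SectionEnumeration}. On each such $C$, Lemma \ref{lem:linear} says that $\Theta$ is linear; concretely, for every $u \in \parity_4$ one selects a closest lattice vector $v_u \in \Z^4$ to $u$ (with respect to any matrix in the interior of $C$), giving the formula $\Theta_u(A) = -A[v_u - u]$ for every $A \in C$. Fourier transformation then yields explicit linear formulas for the fifteen conorms $\vartheta_u$ on $C$, and the rational polyhedral cone $K \cap C$ is cut out from $C$ by these linear inequalities.

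Second, I would compute the extreme rays of $K \cap C$ and verify in each case that they are generated by rank-one matrices $v v^t$ with $v \in \Z^4$ primitive. Third, grouping the resulting primitive vectors into one or more families that form the columns of unimodular matrices $M_j \in \Z^{4 \times r_j}$, one obtains $K \cap C \subseteq \bigcup_j \sigma(M_j) \subseteq \cM$, which finishes the proof.

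The main obstacle is the combinatorial verification of unimodularity: one has to check that every $4 \times 4$ minor of each candidate matrix $M_j$ lies in $\{0, \pm 1\}$, and that the subdivision of $K \cap C$ by the conic hulls of the $M_j$'s actually covers $K \cap C$. If for some iso-edge domain the extreme rays of $K \cap C$ generated a non-regular matroid, the conjecture would fail, so this step is where the content lies. The small number of cases (three up to $\GL_4(\Z)$) and the well-understood structure of the dimension-$4$ iso-edge tessellation make the computation tractable, and I expect it to be carried out with the aid of a computer algebra system, in the same spirit as the verifications performed elsewhere in the paper.
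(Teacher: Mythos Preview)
Your plan is correct and would work; it is in fact exactly the strategy the paper uses for the case $n=5$ (compute $K\cap C$ on each iso-edge domain, list its extreme rays, and verify they are rank-one forms coming from a unimodular system). For $n=4$, however, the paper argues slightly differently: instead of computing $K\cap C$ and checking its extreme rays are matroidal, it works with the three secondary cones $\mathcal{D}_1,\mathcal{D}_2,\mathcal{D}_3$ and their known extreme rays $R_1,\ldots,R_{12}$ from \cite[\S4.4.3]{VallentinThesis}, observes that the only non-matroidal ray is $R_{11}$, and exhibits a single explicit conorm $\vartheta_u$ with $u=(\tfrac12,\tfrac12,0,0)$ that is negative on $R_{11}$ and zero on all other $R_k$ with $k\neq5$. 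Hence any matrix not in the matroidal locus involves $R_{11}$ with positive coefficient and has $\vartheta_u<0$. The paper's argument avoids computing the full face lattice of $K\cap C$ and gives a concrete witness inequality; your approach is more uniform and machine-checkable, at the cost of a larger (though still small) polyhedral computation.
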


\begin{proof}
 One direction is Proposition \ref{prop:conormsmatroidal}.
 For the other direction note that it follows from Lemma \ref{lem:linear} that $\vartheta_u$ is linear on the secondary cone of every Delaunay subdivision of $\R^n$. For $n=4$ we use the extreme rays $R_1,\ldots,R_{12}$ of the three secondary cones $\mathcal{D}_1,\mathcal{D}_2,\mathcal{D}_3$ given in \cite[\S 4.4.3]{VallentinThesis}. Every matrix from $\mathcal{D}_1$ is in the matroidal locus. The same is true for every matrix in a face of $\mathcal{D}_2$ and $\mathcal{D}_3$ that does not have $R_{11}$ as an extreme ray. Every other matrix in the secondary cone of $\mathcal{D}_2$ or $\mathcal{D}_3$ is the sum of a positive multiple of $R_{11}$ and a conic combination of matrices from $\{R_1,\ldots,R_{12}\}\setminus\{R_5\}$. Letting $u=(\frac{1}{2},\frac{1}{2},0,0)$ one calculates that $\vartheta_u(R_{11})=-1$ and $\vartheta_u(R_{k})=0$ for all $k\not\in\{5,11\}$. Therefore, by linearity we have $\vartheta_u(A)<0$ for every such matrix $A$.
\end{proof}

\begin{theorem}
Conjecture \ref{con:matroidal} is true for $n=5$.
\end{theorem}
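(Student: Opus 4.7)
The plan is to mimic the $n=4$ argument using the classification of secondary cones in dimension $5$ from \cite{ClassifDimFive} in place of Vallentin's three cones. One direction is Proposition \ref{prop:conormsmatroidal}. For the other direction, assume $A\in S^5_{>0}$ satisfies $\vartheta_u(A)\geq 0$ for every $u\in\parity_5$, and aim to show $A$ lies in the matroidal locus.

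By Lemma \ref{lem:linear} and the definition of $\vartheta_u$ as a signed sum of the $\Theta_w$, each conorm is a linear function on the closure of every secondary cone $\overline{\Delta(\mathcal{D})}$. It thus suffices to fix a secondary cone and show that the polyhedral set $\{A\in\overline{\Delta(\mathcal{D})}:\,\vartheta_u(A)\geq 0\text{ for all }u\in\parity_5\}$ coincides with the conic hull of its matroidal extreme rays, i.e.\ with $\overline{\Delta(\mathcal{D})}$ intersected with the matroidal locus. Proposition \ref{prop:conormsmatroidal} is one inclusion. For the reverse inclusion, I would enumerate all $\GL_5(\Z)$-classes of secondary cones, compute their extreme rays, flag those that are matroidal (i.e.\ rank-one forms $vv^t$ with $v$ a column of some unimodular integer matrix), and then, for each non-matroidal extreme ray $R$, produce a characteristic $u_R\in\parity_5$ with $\vartheta_{u_R}(R)<0$ while $\vartheta_{u_R}$ vanishes on every matroidal extreme ray of the cone containing $R$. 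Exactly as in the $n=4$ proof, one then chooses a decomposition $A=\sum_i\lambda_i R_i$ that isolates the contribution of one non-matroidal ray $R$ at a time; positive $\lambda_R$ then forces $\vartheta_{u_R}(A)<0$, contradicting the hypothesis and placing $A$ in the conic hull of the matroidal extreme rays.

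The principal obstacle is computational scale. In $n=4$ a single characteristic $u=(\tfrac12,\tfrac12,0,0)$ simultaneously certified both non-matroidal secondary cones; in $n=5$ the witness $u_R$ must be chosen cone by cone and typically ray by ray, and several of the secondary cones are non-simplicial with multiple non-matroidal extreme rays, so that the ``decomposition avoiding the other bad rays'' step (the role played in $n=4$ by excluding $R_5$) needs a coordinated choice across those rays. Each individual witness search is nevertheless a finite linear-algebraic question, so the argument reduces to a finite computer verification. Its output can be cross-checked against the mass formula of Theorem \ref{Mass_Formula} and against the known description of the matroidal locus from \cite{Melo_Viviani}; any face of a secondary cone on which all conorms vanish identically must be a matroidal face represented by a regular matroid, providing a further consistency check.
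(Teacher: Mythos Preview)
Your reduction to a finite polyhedral check is right, and in spirit this matches the paper, but two points deserve comment.

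First, a difference of setup: you work with the secondary cones from \cite{ClassifDimFive}, whereas the paper works with the $76$ iso-edge domains. Since the conorms $\vartheta_u$ are $\pm$-sums of the $\Theta_w$, Lemma \ref{lem:linear} already gives linearity on each iso-edge domain, so there is no need to pass to the finer secondary fan. This is only an efficiency issue (76 cones instead of several hundred), not a correctness one.

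Second, and this is the real gap: your witness-per-ray strategy does not close. Finding for each non-matroidal extreme ray $R$ a characteristic $u_R$ with $\vartheta_{u_R}(R)<0$ and $\vartheta_{u_R}\equiv 0$ on the matroidal rays is not enough when a cone has several non-matroidal extreme rays, because $\vartheta_{u_R}$ may well be \emph{positive} on another non-matroidal ray $R'$, and then the sign of $\vartheta_{u_R}(A)$ is undetermined. You flag this (``a coordinated choice across those rays'') but do not say how to make it, and in general there is no reason such a coordinated family of witnesses exists; the $n=4$ argument worked only because each non-matroidal secondary cone had a single bad ray to isolate. The paper bypasses this entirely: on each iso-edge domain $D$ it simply computes the polyhedral cone
\[
\{A\in D:\ \vartheta_v(A)\geq 0\ \text{for all } v\in\parity_5\}
\]
by linear programming and checks that all of its extreme rays are rank-one matrices coming from a unimodular system. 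That one step replaces your entire witness/decomposition mechanism and is what you should do instead.
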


\begin{proof}
On a given iso-edge domain $D$ and for a fixed vector $v\in \parity_n$ the expression $\vartheta_v(A)$ is linear in $A$.
We then consider the following polyhedral cone embedded in $D$:
\begin{equation*}
\left\{ A\in D \mbox{~and~} \vartheta_v(A)\geq 0 \mbox{~for~}v\in \parity_n \right\}
\end{equation*}
We can compute its description by using linear programming. We considered all the $76$ domains in
dimension $5$ and found out that for each of them this set is spanned by rank one matrices that determine
an {totally} unimodular system of vectors and so belong to the matroidal locus.
Furthermore, the $4$ maximal {totally} unimodular systems in dimension $5$ are obtained in this way which concludes {the proof}.
\end{proof}

\section{Generalizations}\label{SectionGeneralization}
The construction of iso-edge domains can be generalized to several different contexts.
We list here some that could be of interest for future {research}.

\subsection{Generalization to subspaces}
In \cite{MartinetSigrist,EquivariantLtype} the perfect form and secondary cones were generalized
to the case of matrices in a vector space ${\mathcal V}\subset S^n$ such
that ${\mathcal V}\cap S^n_{>0}\not=\emptyset$. This kind of setting is used in an enormous
number of cases for a multiplicity of purposes.
It would be interesting to extend this to the case of iso-edge domains.

\subsection{Generalization to $k$-cells}
The iso-edge domains concern the case of dimension $1$ cells of the Delaunay polytopes while the
secondary cones consider the full dimensional %$n$ dimensional
cells of the Delaunay tessellation.
If we fix the $k$-dimensional faces of Delaunay polytope then we also get a polyhedral decomposition
of $S^n_{>0}$.
{Obviously, the decomposition for $k=n$ would coincide with the one for $k=n-1$ but based on
  dimension $3$ one would expect it to also coincide with the one for $k=n-2$.
  If that was true, the first interesting case would be dimension $5$ where the cases $k=1$,
  $2$ and $5$ would likely be all different.}

\subsection{Generalization to the case of symmetric faces}
The iso-edge domains correspond to the centrally symmetric faces of the Delaunay tessellation.
If we take a finite subgroup $G$ of $\GL_n(\ZZ)$, then we can consider the forms invariant under this
group. The group $G$ acts on $\RR^n / \ZZ^n$. If the set of fixed points $S$ is finite, then we can consider
the set of closest points to them. Those define invariant cells of the Delaunay tessellation.
If $G = \{\pm Id_n\}$, then we get the case of iso-edge domains.
%It would be interesting to consider
%other simple cases such as a group of order $5$ in dimension $4$. \ma{Why exactly the cyclic group of order $5$?}

\section{Acknowledgments}
The first author thanks Viatcheslav Grishukhin for introducing him to the works of Ryshkov and Baranovskii and Klaus
Hulek for introducing him to the theory of compactifications.
Both authors thank Frank Vallentin for the invitation to the conference ``Discrete geometry with a view on
symplectic and tropical geometry''. {Finally, we thank the anonymous referee for comments that improved the manuscript.}

\bibliographystyle{amsplain_initials_eprint}
\bibliography{RefIsoEdge}

\end{document}